\newtheorem{theorem}{Theorem}[section]
\newtheorem{corollary}[theorem]{Corollary}
\newtheorem{prop}[theorem]{Proposition}
\newtheorem{defn}{Definition}[section]
\begin{document}

\title[Special Intersection Graph in The Topological Graphs ]{Special Intersection Graph in The Topological Graphs}
\date{}
\maketitle
\begin{center}
\author{ Ahmed A. Omran $^{1}$, Veena Mathad$^{2}$, Ammar Alsinai$^{3}$ , Mohammed A. Abdlhusein $^{4}$.  \\
\small $^{2,3}$Department of Studies in Mathematics, University of Mysore, \\
\small Manasagangotri,  Mysuru - 570 006, India \\
E-mail:aliiammar1985@gmail.com $\&$ $veena_m$athad@rediffmail.com\\
\small $^{1}$ Department of  Mathematics, College of Education for Pure Science, University of Babylon, Babylon, Iraq\\
 E-mail: pure.ahmed.omran@uobabylon.edu.iq \\
 \small $^4$ Department  of  Mathematics, College of Education for Pure Science, University of Thi-Qar, Thi-Qar, Iraq.}\\
 E-mail:mmhd@utq.edu.iq

\end{center}
\begin{abstract}
 In this paper, new graphs $G_\tau=\left(V,E\right)$ are constructed from the discrete topological space $(X,\tau)\ $ . Several properties of this type of graphs are given such that: the clique number equals the number of elements in X also the number of pendants vertices, $G_\tau$ has no isolated vertices, the minimum degree in $G_\tau$ is one and maximum degree equal $n-1+\sum^{n-1}_{i=2}\binom{n-1}{i}$ , the minimum dominating set is determined and $\gamma(G_\tau)$ is evaluated for $G_\tau$ and for corona and join operations between to discrete topological graphs. At what matter $\beta\left(G_\tau\right)=\gamma(G_\tau)$ is discussed for $G_\tau$. Also that $G_\tau$ is proved a connected graph of order $2^n-2$ and it has no isolated vertex. Then,  rad $\ G_\tau$ and diam $\ (G_\tau)$ are evaluated.

\vspace{2mm}
\noindent\textsc{MSC (2010) Classification.} 05C69,\\
\vspace{2mm}
\noindent\textsc{Keywords}:  Discrete topology graph, domination number, independence number, clique number, corona graph, join graph.
\end{abstract}

\maketitle
\section{introduction}
 Topology is an important and basic branch of mathematics, which arose after the emergence of graphs which originated in 1736 when Leonhard Euler published his paper on the Seven Bridges of Königsberg. Topology has spread widely through theoretical and applied studies rather than graphs. In the recent period, a lot of studies have appeared linking the concepts of topology with graph theory. Some studies convert topology into a graph or vice versa, the vertex set of a graph has been used as an open set in topology \cite{A9}. In a different study, the minimal dominating set in a graph has been used as an open set in a topology\cite{A13,A14,A15}. In this paper, Let $X$ be a non-empty set, the discrete topological graph $G_\tau=\left(V,E\right)$ is a graph of the vertex set V=$\left\{A\ ;\ A\in\tau\ and\ A\neq\emptyset\ ,\ \ X\right\}$and the edge set $E=\left\{A\ B;\ A\cap B=\emptyset\right\}$. If the order of  $X$ is $n=2$ then $G_\tau$ will isomorphic to $K_2$ , if $n=3$ we prove $G_\tau\equiv{K_3\odot K}_1$. If $n\geq3$ then $G_\tau$ has induced complete graph of order $n$ and induced null graph of order n. Many bounds on the numbers mentioned above in addition to the maximum and minimum degrees are been determined. Also, some properties on the discrete topology are been proved. Moreover,  the domination number of some operation graphs is determined. Also, the radius and diameter of this graph are ben introduced. For more details, the reader can be seen in\cite{A10, A11, A12}.
 \section{Main results}
 \begin{defn}
   Let X be a non-empty set and $\tau$ be a discrete topology on $X$. The discrete topological graph denoted by $G_\tau=\left(V,E\right)$ is a graph of the vertex set $V=\left\{A;A\in\tau\ and\ A\neq\emptyset,\ X\right\}$ and the edge set $E=\left\{AB;A\cap B=\emptyset\right\}$.
 \end{defn}
 \begin{prop}\label{DD8}
 If $n=2$, then $G_\tau\equiv K_2$.
 \end{prop}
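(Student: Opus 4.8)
The plan is to argue directly from the definitions by a complete enumeration, since for $n=2$ the topology is small enough to list explicitly. Write $X=\{x_1,x_2\}$. Because $\tau$ is the discrete topology, $\tau=\mathcal{P}(X)=\{\emptyset,\{x_1\},\{x_2\},X\}$. The first step is to pin down the vertex set: by Definition~2.1 we discard $\emptyset$ and $X$ from $\tau$, so $V=\{\{x_1\},\{x_2\}\}$, and in particular $|V|=2=2^{n}-2$.

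The second step is to determine the edge set. The only unordered pair of distinct vertices is $\{x_1\}$ and $\{x_2\}$, and since $\{x_1\}\cap\{x_2\}=\emptyset$, the rule defining $E$ puts the edge $\{x_1\}\{x_2\}$ into $E$; there are no other pairs to test, so $E=\big\{\{x_1\}\{x_2\}\big\}$ and $|E|=1$.

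Finally, a graph on two vertices with one edge joining them is by definition $K_2$; the map sending $\{x_1\}\mapsto 1$ and $\{x_2\}\mapsto 2$ is the required isomorphism, so $G_\tau\equiv K_2$. I do not anticipate any real obstacle here: the entire content is the bookkeeping of which members of $\tau$ survive as vertices and the single disjointness check, so the proof is essentially immediate once the definitions are unwound. The only point worth stating carefully is that the exclusion of both $\emptyset$ and $X$ leaves exactly the two singletons, which is what forces the order to be $2$ rather than $3$.
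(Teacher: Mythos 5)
Your proof is correct and follows essentially the same route as the paper's: list the discrete topology on a two-element set, observe that excluding $\emptyset$ and $X$ leaves the two singletons as vertices, and check the single disjointness condition to get the one edge of $K_2$. Your version is slightly more explicit about the edge-set enumeration and the isomorphism map, but there is no substantive difference.
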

 \begin{proof}
 . Let $X=\left\{1,2\right\}$, then $\tau=\left\{\emptyset,\ X,\ \left\{1\right\},\ \left\{2\right\}\right\}$, so $V=\left\{\left\{1\right\},\ \left\{2\right\}\right\}$. It is obvious that this graph is isomorphic to the graph $K_2$, since the vertex $\left\{1\right\}$ is adjacent to the vertex $\left\{2\right\}$.
 \end{proof}
 \begin{prop}\label{EE8}
   . If $\left|X\right|=3$, then $G_\tau\equiv{K_3\odot K}_1$.
 \end{prop}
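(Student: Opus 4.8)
The plan is to enumerate the discrete topology on $X=\{1,2,3\}$ explicitly and read the adjacency structure off the disjointness condition. Since $\tau$ is the power set of $X$, deleting $\emptyset$ and $X$ leaves the six vertices $\{1\},\{2\},\{3\},\{1,2\},\{1,3\},\{2,3\}$, so $G_\tau$ has order $2^3-2=6$, matching $|V(K_3\odot K_1)|=2\cdot 3=6$. I would then partition $V$ into the three singletons $S=\{\{1\},\{2\},\{3\}\}$ and the three two-element sets $T=\{\{1,2\},\{1,3\},\{2,3\}\}$ and check adjacency within and across these classes.

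The three checks are routine. Any two distinct singletons are disjoint, so $S$ induces a triangle $K_3$. Any two distinct members of $T$ meet in exactly one point of $X$, so $T$ is an independent set. Finally, a singleton $\{i\}$ and a two-element set $B$ satisfy $\{i\}\cap B=\emptyset$ iff $i\notin B$, i.e. iff $B=X\setminus\{i\}$; hence each $\{i\}$ has exactly one neighbour in $T$, namely its complement $X\setminus\{i\}$, and conversely each $B\in T$ has exactly one neighbour in all of $G_\tau$, the singleton $X\setminus B$. Thus $G_\tau$ is a triangle on $\{1\},\{2\},\{3\}$ together with one pendant vertex attached to each triangle vertex.

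This is exactly the structure of $K_3\odot K_1$, so to conclude I would exhibit the bijection $\varphi$ sending the $i$-th vertex of the base copy of $K_3$ to $\{i\}$ and the pendant attached to it to $X\setminus\{i\}$, and verify that $uv\in E(K_3\odot K_1)$ if and only if $\varphi(u)\varphi(v)\in E(G_\tau)$, which is immediate from the adjacency list just established. There is no real obstacle; the one point to state carefully is the complement pairing $\{i\}\leftrightarrow X\setminus\{i\}$, which is what produces the pendant edges, together with the verification that no extra edges occur — namely that the members of $T$ are mutually non-adjacent and that $\{i\}$ is never adjacent to a two-element set containing $i$.
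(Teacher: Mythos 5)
Your proposal is correct and follows essentially the same route as the paper: enumerate the six vertices, observe that the three singletons induce a $K_3$, and pair each two-element set as a pendant with its complementary singleton. If anything, your argument is the more careful one, since you explicitly verify that the two-element sets are mutually non-adjacent (any two of them intersect), a point the paper's wording ("pairwise joined") muddles.
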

 \begin{proof}
 Let $X=\left\{1,2,3\right\}$, then $\tau=\left\{\emptyset,\ X,\ \left\{1\right\},\ \left\{2\right\},\left\{3\right\},\ \left\{1,2\right\},\left\{1,3\right\},\left\{2,\ 3\right\}\right\}$, so $V=\left\{\left\{1\right\},\ \left\{2\right\},\ \left\{3\right\},\ \left\{1,2\right\},\left\{1,3\right\},\left\{2,\ 3\right\}\right\}$. Let $u$ and $v$ be two vertices of singleton element, thus $u$ is adjacent to $v$. Therefore, the vertices that include a singleton element $\left\{\left\{1\right\},\ \left\{2\right\},\ \left\{3\right\}\right\}$ form a complete induced subgraph as shown in Figure\ref{AA8}. The remains vertices are $\left\{\left\{1,2\right\},\left\{1,3\right\},\left\{2,\ 3\right\}\right\}$, these vertices are pairwise joined and each one of them is adjacent to only one vertex from a singleton vertex as shown in Figure $2.1$. Thus, $G_\tau\equiv{K_3\odot K}_1$.
 \end{proof}
\begin{figure}[h]
	\centering
	\includegraphics[width=0.3\linewidth]{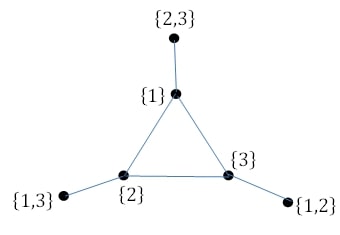}
	\caption{The graph $G_\tau\equiv{K_3\odot K}_1$.}
	\label{AA8}
\end{figure}
\begin{prop}\label{CC8}
Let $\left|X\right|=n$, then\\
\begin{enumerate}
  \item The clique number is $n$.
  \item The number of pendants vertices is $n$.
\end{enumerate}
\end{prop}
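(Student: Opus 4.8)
The plan is to reduce both claims to elementary counting about the subsets of $X$, using the description $V=\{A:\emptyset\neq A\subsetneq X\}$ and the adjacency rule $A\sim B\iff A\cap B=\emptyset$.

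For part (1), I would first observe that a clique in $G_\tau$ is exactly a family $\mathcal{A}$ of nonempty proper subsets of $X$ that are pairwise disjoint. Since every member of such a family contains at least one point of $X$ and distinct members share no point, $|\mathcal{A}|\le|X|=n$, so the clique number $\omega(G_\tau)\le n$. To see the bound is attained, note that for $n\ge 2$ each singleton $\{i\}$ ($i\in X$) is a nonempty proper subset, hence a vertex, and the $n$ singletons are pairwise disjoint; they therefore form a clique of size $n$, giving $\omega(G_\tau)=n$.

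For part (2), I would first compute the degree of an arbitrary vertex $A\in V$. A vertex $B$ is adjacent to $A$ iff $\emptyset\neq B\subseteq X\setminus A$; since $A\neq\emptyset$ we have $X\setminus A\subsetneq X$, so every nonempty subset of $X\setminus A$ is automatically a vertex, and nothing has to be discarded for equalling $X$. Hence
\[
\deg_{G_\tau}(A)=2^{\,|X\setminus A|}-1=2^{\,n-|A|}-1 .
\]
Thus $A$ is a pendant iff $2^{\,n-|A|}-1=1$, i.e.\ $|A|=n-1$. The number of $(n-1)$-element subsets of $X$ is $\binom{n}{n-1}=n$, so $G_\tau$ has exactly $n$ pendant vertices, each of the form $X\setminus\{i\}$ with unique neighbour $\{i\}$.

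No step is a genuine obstacle; the only points needing care are verifying that the extremal clique of singletons really lies in $V$ (this uses $n\ge 2$, so that a singleton is a proper subset) and that the degree count over subsets of $X\setminus A$ loses nothing — both immediate from $A\neq\emptyset$. The argument is uniform in $n$ and in particular is consistent with the cases $n=2$ ($K_2$) and $n=3$ ($K_3\odot K_1$) settled in Propositions~\ref{DD8} and \ref{EE8}.
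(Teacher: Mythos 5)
Your proof is correct, and it takes a more systematic route than the paper's. For part (1) the paper exhibits the $n$ singletons as a clique and then argues only that this particular clique cannot be \emph{extended} by a vertex with two or more elements; that establishes maximality of one clique, not an upper bound on all cliques. Your observation that any clique is a pairwise disjoint family of nonempty subsets of $X$, hence has at most $|X|=n$ members, supplies the missing upper bound and closes that gap. For part (2) the paper identifies the $(n-1)$-element sets as pendants by noting each meets exactly one singleton in the empty set, and rules out other pendants by finding two singleton neighbours for any vertex of size at most $n-2$; your exact degree formula $\deg(A)=2^{\,n-|A|}-1$ (valid because $A\neq\emptyset$ forces $X\setminus A\subsetneq X$, so every nonempty subset of $X\setminus A$ is a legitimate vertex) subsumes that argument, characterises pendants as precisely the sets with $|A|=n-1$, and as a bonus recovers the paper's later values $\delta(G_\tau)=1$ and $\Delta(G_\tau)=2^{n-1}-1=n-1+\sum_{i=2}^{n-1}\binom{n-1}{i}$ in one stroke. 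The only caveat, which you already flag, is the implicit assumption $n\ge 2$ so that singletons are proper subsets and the vertex set is nonempty.
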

\begin{proof}
 {\bf({1})}:In the same manner in the previous proposition, the subset of the vertex set which contains only a singleton element makes an induced subgraph S isomorphic to a complete graph of order $n$. The remain vertices contain at least two elements say $\left\{a,b\right\}$,$\ a,b=1,2,\ldots,\ n$, then $S\cup\left\{a,b\right\}$ is not complete, since $\left\{a\right\},\ \left\{b\right\}\in S$ (as an example, see Figure 2.2).\\
 {\bf({2})}: Let $S$ be a set that contains all vertices which have $n-1$ elements, then each vertex from the set $S$ say $u$ is adjacent to one vertex say $v\ $ that has singleton element such that $\left\{u\right\}\cup\left\{v\right\}=V$, and it is obvious that $\left|S\right|=n$. Now, if there exists another pendant vertex say w, so the most number of elements of this vertex is $n-2$. Thus, there are two vertices of singleton element which do not belong to elements of the vertex $w$, these singleton vertices are adjacent to the vertex w and this is a contradiction with a pendant property (as an example, see Figure 2.2). Thus, the pendants vertices have only in the set $S$ and the required is done.
\end{proof}
\begin{figure}[h]
	\centering
	\includegraphics[width=0.7\linewidth]{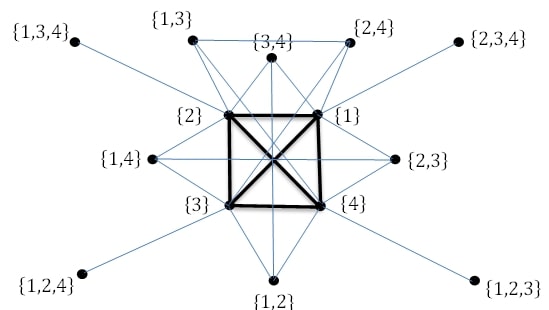}
	\caption{The graph $G_\tau~ graph whenever ~ \left|X\right|=4$.}
	\label{BB8}
\end{figure}
\begin{theorem}
 Let $\left|X\right|=n$, and $G_\tau $ be a discrete topological graph, then
 \begin{enumerate}
   \item $\delta\left(G_\tau\right)=1.$
   \item $\Delta(G_\tau)=n-1+\sum^{n-1}_{i=2}\binom{n-1}{i}$.
 \end{enumerate}

\end{theorem}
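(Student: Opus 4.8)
The plan is to compute the degree of an arbitrary vertex of $G_\tau$ as a function of its cardinality, and then to optimize this over all admissible cardinalities. Recall that $V$ consists of all subsets $A\subseteq X$ with $A\neq\emptyset$ and $A\neq X$, and that two such vertices $A,B$ are adjacent precisely when $A\cap B=\emptyset$, equivalently when $B\subseteq X\setminus A$.

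First I would fix a vertex $A$ with $|A|=k$, where necessarily $1\le k\le n-1$. Since $A\neq\emptyset$, the set $X\setminus A$ is itself a nonempty proper subset of $X$ with $|X\setminus A|=n-k$, and every nonempty $B\subseteq X\setminus A$ automatically satisfies $B\neq X$, hence lies in $V$. Therefore the neighbours of $A$ are exactly the nonempty subsets of $X\setminus A$, and counting them yields $\deg_{G_\tau}(A)=2^{\,n-k}-1$. In particular the degree of a vertex depends only on its size.

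For part (1): the quantity $2^{\,n-k}-1$ is strictly decreasing in $k$, so it is minimized at $k=n-1$, where it equals $2^{1}-1=1$; since vertices of size $n-1$ exist (these are precisely the pendant vertices of Proposition~\ref{CC8}), we get $\delta(G_\tau)=1$. For part (2): the same expression is maximized at $k=1$, where it equals $2^{\,n-1}-1$, and singleton vertices exist, so $\Delta(G_\tau)=2^{\,n-1}-1$. It then remains only to rewrite $2^{\,n-1}-1$ in the stated form: by the binomial theorem $2^{\,n-1}=\sum_{i=0}^{n-1}\binom{n-1}{i}$, whence $2^{\,n-1}-1=\sum_{i=1}^{n-1}\binom{n-1}{i}=\binom{n-1}{1}+\sum_{i=2}^{n-1}\binom{n-1}{i}=(n-1)+\sum_{i=2}^{n-1}\binom{n-1}{i}$, which is exactly the claimed value of $\Delta(G_\tau)$.

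The argument is essentially one clean count, so I do not expect a serious obstacle; the only points that need care are the verification that every nonempty subset of $X\setminus A$ is a legitimate vertex (so that no neighbour is lost or double-counted at the boundary cases $B=\emptyset$ or $B=X$), and the bookkeeping with the binomial identity that converts $2^{\,n-1}-1$ into the displayed form. As a sanity check one can note that for $n=2$ this gives $\delta=\Delta=1$, and for $n=3$ it gives $\delta=1$, $\Delta=3$, both consistent with Propositions~\ref{DD8} and~\ref{EE8}.
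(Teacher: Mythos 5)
Your proof is correct, and it takes a cleaner route than the paper. The paper handles the two parts separately: for the minimum it simply cites the existence of pendant vertices together with the absence of isolated vertices, and for the maximum it fixes a singleton vertex $v$ and enumerates its neighbours grouped by cardinality ($n-1$ other singletons, then $\binom{n-1}{2}$ doubletons disjoint from $v$, and so on), which produces the stated sum directly but leaves implicit the claim that singleton vertices actually realize the maximum degree. You instead derive the closed-form degree of an \emph{arbitrary} vertex, $\deg(A)=2^{\,n-|A|}-1$, via the observation that the neighbours of $A$ are exactly the nonempty subsets of $X\setminus A$, and then obtain both extremes at once from the monotonicity of $2^{\,n-k}-1$ in $k$; the binomial identity $2^{\,n-1}-1=(n-1)+\sum_{i=2}^{n-1}\binom{n-1}{i}$ reconciles your answer with the displayed form. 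What your approach buys is precisely the justification the paper skips — that no vertex of larger cardinality can have larger degree — plus a uniform degree formula that also recovers the minimum without appealing to the earlier pendant-vertex proposition. Both arguments are valid; yours is the more self-contained and rigorous of the two.
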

\begin{proof}
{\bf({Case 1})}:Since $G_\tau $ has no isolated vertex, then the required is obtained straightforwardly from proposition \ref{CC8}.\\
{\bf({Case 2})}:The maximum degree of the graph $G_\tau$ is founded in each vertex of the singleton element and these vertices make as an induced subgraph of order $n $ isomorphic to the complete graph. Thus, each vertex of them say $v $ is adjacent to all other vertices in that complete induced subgraph, so deg$\left(v\right)\geq n-1$ (as an example, see Figure \ref{BB8}). Moreover, the vertex $v$ is adjacent to all vertices that have two elements different from $v$ and the number of these vertices is $\binom{n-1}{2}$,\ \ if $n\geq3$ (as an example, see Figure \ref{BB8}). Again, the vertex $v$ is adjacent to all vertices that have three elements different from $v$ and the number of these vertices is $\binom{n-1}{3}$, if $n\geq3$ (as an example, see Figure \ref{BB8}) and so on…. The last step when the vertex $v$ is adjacent to the vertices have $n-1$ element and the number of these vertices is $\binom{n-1}{n-1}=1$ (as an example, see Figure \ref{BB8})Therefore,$\Delta(G_\tau)=n-1+\sum^{n-1}_{i=2}\binom{n-1}{i}$.
\end{proof}
\begin{theorem}\label{FF8}

\item Let $G_\tau$  be a discrete topological graph, where $\left|X\right|=n$, then, \\
  $$\gamma\left(G_\tau\right)=\left\{%
\begin{array}{ll}
    1,  \hbox{  if $n =2$ ;} \\
   n,  \hbox{  if $n > 2$ .} \\
\end{array}%
 \right.    $$
\end{theorem}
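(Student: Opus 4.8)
The plan is to split along the two cases. When $n=2$ we already know from Proposition \ref{DD8} that $G_\tau\equiv K_2$, and a single vertex of $K_2$ dominates the other, so $\gamma(G_\tau)=1$; nothing more is needed here. For $n>2$ the strategy is the usual one: exhibit a dominating set of size $n$ to obtain $\gamma(G_\tau)\le n$, and then prove that no set of fewer than $n$ vertices can dominate, giving $\gamma(G_\tau)\ge n$.

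For the upper bound I would take $S=\{\{i\}:i\in X\}$, the set of all singleton vertices, so that $|S|=n$. Recall that the vertices of $G_\tau$ are exactly the proper nonempty subsets of $X$, and two are adjacent precisely when they are disjoint. Given any vertex $A\notin S$ we have $2\le |A|\le n-1$, so there is an element $i\in X\setminus A$; then $\{i\}\cap A=\emptyset$, i.e. the vertex $\{i\}\in S$ is adjacent to $A$. Since every singleton vertex already lies in $S$, the set $S$ dominates $G_\tau$, whence $\gamma(G_\tau)\le n$.

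For the lower bound I would exploit the pendant vertices identified in Proposition \ref{CC8}. For each $i\in X$ the set $P_i=X\setminus\{i\}$ is a vertex of degree $1$ whose unique neighbour is $\{i\}$; hence any dominating set $D$ must contain at least one vertex of the pair $\{P_i,\{i\}\}$ for every $i$. Here is the one point where the hypothesis $n>2$ is genuinely used: since $|P_i|=n-1\ge 2$, a co-singleton is never equal to a singleton, and distinct indices yield distinct co-singletons and distinct singletons, so the $n$ pairs $\{P_1,\{1\}\},\dots,\{P_n,\{n\}\}$ are pairwise disjoint. A dominating set must therefore hit each of these $n$ disjoint pairs, so $|D|\ge n$ and $\gamma(G_\tau)\ge n$. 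Combining the two inequalities gives $\gamma(G_\tau)=n$ for $n>2$.

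The only real subtlety is the disjointness of the pendant pairs, which is exactly where $n>2$ enters: for $n=2$ the co-singletons and singletons coincide, all the pairs collapse into one, and the bound degenerates to $\gamma(G_\tau)=1$, consistent with the first case. Everything else is a routine verification that "disjoint as subsets of $X$" is the adjacency relation of $G_\tau$.
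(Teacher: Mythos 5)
Your proof is correct and follows essentially the same route as the paper: the singletons form a dominating set of size $n$ for the upper bound, and the $n$ pendant vertices $X\setminus\{i\}$ with their unique supports $\{i\}$ force the lower bound. In fact your lower-bound argument via the $n$ pairwise disjoint pendant--support pairs is the clean version of what the paper intends; the paper's own phrasing (``there is a support vertex adjacent to more than one pendant'') is a misstatement, since each support $\{i\}$ is adjacent to exactly one pendant, which is precisely what makes the pairs disjoint and the bound $\gamma(G_\tau)\ge n$ valid.
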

\begin{proof}
{{\bf Case 1}}: If n=2, then according to Proposition \ref{DD8}, $G_\tau\equiv K_2$, so the required is obtained.\\
{{\bf Case 2}}: i f$ n>2$, then according to Proposition \ref{CC8}, the number of pendants vertices is n and there is a support vertex adjacent to more than one pendant. Thus, $\gamma\left(G_\tau\right)\geq n$. Each support vertex has a singleton element, and dominates only one of the pendants vertices, so, the set of the support vertices say S dominates to all pendants vertices. Moreover, the $r$ remaining  vertices are not pendants or supports have the cardinal $\mu$ such that $2\le\mu\le n-2$, so each vertex of them is adjacent to at least one vertex of $S$, if not that means there is a vertex adjacent to all vertices that have singleton vertex, and this means again the cardinal of this vertex is equal to $n$ and this is a contract. Therefore, set $S$ is dominating set with minimum cardinality, and the result is obtained.
\end{proof}
\begin{theorem}\label{GG8}
Let $G_\tau $  be a discrete topological graph, where $\left|X\right|=n$ , then
$$\beta\left(G_\tau\right)= \sum^{{n-1}}_{i=\lfloor \frac{n}{2}\rfloor}\binom{n}{i}$$
\end{theorem}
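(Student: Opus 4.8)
The plan is to describe the maximum independent set of $G_\tau$ explicitly and then count it. Recall that the vertices of $G_\tau$ are the proper nonempty subsets of $X$, and two vertices $A,B$ are adjacent exactly when $A\cap B=\emptyset$. Hence an independent set is precisely a family $\mathcal F$ of proper nonempty subsets of $X$ that is \emph{intersecting}: any two members meet. I would first observe that the family $\mathcal F_0=\{A\subseteq X : |A|\ge \lceil n/2\rceil,\ A\neq X\}$ is intersecting, because if $|A|,|B|\ge n/2$ then $|A|+|B|\ge n$, which forces $A\cap B\neq\emptyset$ (if they were disjoint, $|A\cup B|=|A|+|B|\ge n$ would give $A\cup B=X$ with $A,B$ complementary, but then $|A|=|B|=n/2$ and we can exclude exactly one of each complementary pair — see the parity remark below). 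So $\mathcal F_0$ is an independent set, giving the lower bound $\beta(G_\tau)\ge\sum_{i=\lceil n/2\rceil}^{n-1}\binom{n}{i}$; one then checks this equals $\sum_{i=\lfloor n/2\rfloor}^{n-1}\binom{n}{i}$ only when $n$ is odd, so some care with the even case is needed, and I would present the bound in the symmetric form used in the statement after reconciling parity.

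For the upper bound, the key tool is the classical fact that an intersecting family of subsets of an $n$-set has size at most $2^{n-1}$ (for each complementary pair $\{A,X\setminus A\}$ at most one member lies in $\mathcal F$). Here the vertex set excludes $\emptyset$ and $X$, i.e.\ it excludes exactly the complementary pair $\{\emptyset,X\}$; removing that pair from consideration, an intersecting subfamily of $V$ has size at most $2^{n-1}-1=\sum_{i=0}^{n-1}\binom{n}{i}-\binom{0}{0}\cdot\!\big/\!\ldots$ — more cleanly, at most one of each of the remaining $2^{n-1}-1$ complementary pairs $\{A,X\setminus A\}$ with $A\neq\emptyset,X$ can appear. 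I would then argue that the extremal choice is forced to be (equivalent to) taking the larger set from each pair: if some small set $A$ with $|A|<n/2$ is included, then its complement $X\setminus A$ (which is large, $|X\setminus A|>n/2$) is excluded, so swapping $A$ for $X\setminus A$ keeps the family intersecting and does not decrease the count; iterating, a maximum intersecting family can be taken inside $\mathcal F_0$, whence $\beta(G_\tau)=|\mathcal F_0|=\sum_{i\ge \lceil n/2\rceil,\ i\le n-1}\binom{n}{i}$.

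The one genuinely delicate point is the middle layer when $n$ is even: sets of size exactly $n/2$ come in complementary pairs $\{A,X\setminus A\}$, and an intersecting family may contain at most one from each such pair, so not all $\binom{n}{n/2}$ of them can be used. I expect this to be the main obstacle, and it must be handled to match the stated formula $\sum_{i=\lfloor n/2\rfloor}^{n-1}\binom{n}{i}$: one shows that exactly half of the $\binom{n}{n/2}$ middle sets can be added to the family $\{A:|A|>n/2\}$ while preserving the intersecting property (e.g.\ fix an element $x\in X$ and take those $n/2$-sets containing $x$), and then $\tfrac12\binom{n}{n/2}+\sum_{i=n/2+1}^{n-1}\binom{n}{i}$ must be shown to equal $\sum_{i=\lfloor n/2\rfloor}^{n-1}\binom{n}{i}$ under the paper's (apparently loose) reading of the summation — I would verify this identity and state clearly the convention being used, since for even $n$ the naive sum $\sum_{i=n/2}^{n-1}\binom{n}{i}$ overcounts the middle layer by a factor of two. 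For odd $n$ there is no middle layer and the argument goes through directly with $\lceil n/2\rceil=\lfloor n/2\rfloor+1$. Finally I would sanity-check small cases: $n=2$ gives $\beta=\binom21=2$ (the two singletons, which are nonadjacent in... wait, in $K_2$ they are adjacent, so $\beta=1$) — this again flags that the formula is intended for $n\ge 3$, matching $n=3$ where $\beta=\binom32+\binom31\!\big/\!\ldots=3$, consistent with $K_3\odot K_1$ having independence number $3$ (the three pendants).
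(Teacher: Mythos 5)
Your reduction of the problem --- independent sets of $G_\tau$ are exactly intersecting families of nonempty proper subsets of $X$ --- is the right one, and your two main tools, the complementary-pair bound (at most one of $A$, $X\setminus A$ from each of the $2^{n-1}-1$ pairs with $A\neq\emptyset,X$) and the star construction (all sets containing a fixed $x\in X$, minus $X$ itself), give the clean answer $\beta(G_\tau)=2^{n-1}-1$. This is a genuinely different, and much sounder, route than the paper's: the paper exhibits the family of ``large'' sets, checks it is intersecting, and then argues it is maximum on the grounds that any other maximal independent set must lie inside $V\setminus D$, which is not a valid inference; your pairing bound is precisely the missing upper-bound argument. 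You are right to be suspicious of the displayed formula, but your diagnosis is too generous: it fails for odd $n$ as well as even $n$. For $n=3$ it gives $\binom{3}{1}+\binom{3}{2}=6$, whereas $\beta(K_3\odot K_1)=3$; for $n=5$ it gives $25$, whereas the paper's own worked example (and $2^{n-1}-1$) gives $15$. The identity $2^{n-1}-1=\sum_{i=\lceil n/2\rceil}^{n-1}\binom{n}{i}$ holds only for odd $n$, and replacing $\lceil n/2\rceil$ by $\lfloor n/2\rfloor$ adds a spurious full middle layer. There is no reading of the summation under which the stated theorem is true; it must be corrected to $\beta(G_\tau)=2^{n-1}-1$.

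Two steps of your argument need repair, though neither is essential. The swapping claim (``replace a small $A$ by $X\setminus A$; the family stays intersecting'') is false in general: for $n=5$ the family $\{\{1\},\{1,2\}\}$ is intersecting, but replacing $\{1,2\}$ by $\{3,4,5\}$ yields $\{3,4,5\}\cap\{1\}=\emptyset$. Fortunately you never need to locate a maximum family inside $\mathcal{F}_0$: the pair bound already gives $\beta\le 2^{n-1}-1$ and the star attains it, so delete the swap. Likewise, $\mathcal{F}_0$ itself is not intersecting for even $n$ (complementary pairs in the middle layer), as you note; the repair you sketch --- keep only the $n/2$-sets containing a fixed element --- is correct and yields $\tfrac12\binom{n}{n/2}+\sum_{i=n/2+1}^{n-1}\binom{n}{i}=2^{n-1}-1$, but it is again simpler to quote the star for all $n$. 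With those adjustments your proposal is a complete and correct proof of the corrected statement.
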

\begin{proof}
  Let $D=\left\{S_i;i=n,\ n+1,\ldots,\ 2n-3\right\}$, where $S_i$ is the set of all vertices have $\lfloor \frac{i+1}{2}\rfloor$ elements. It is clear that $i>\frac{n}{2}\ \ \forall i$, thus $ S_i\cap S_j\neq\emptyset;\ \forall i,j=n,\ n+1,\ldots,\ 2n-3$. Therefore, set $D$ is independent. Now, to prove that the set $D$ is maximal independent, since the independent property is hereditary, thus, it is enough to prove that the set $D$ is $1-maximal$. Suppose that the set $d$ is not maximal, so there is a vertex $v;v\in V-D$ such that $D\cup\left\{v\right\}$  is an independent set. since $v\in V-D$, the vertex $v\ $ has $t$ element such that $t\le\frac{n}{2}$ that means there is at least one vertex in the set $D$ say $u$ such that $\left\{u\right\}\cap\left\{v\right\}=\emptyset$. This leads to the two vertices being adjacent to each other and this is a contract with the independent property. Therefore, the set $D$ is $1-maximal$ and this leads to being the set $D$ is maximal. Moreover, if there is another maximal independent set say $D_1$, then all vertices belonging in this set is belonging to $V-D$ and it is clear that there are many of vertices in the set $V-D$ are adjacent, so $\left|D_1\right|<\left|D\right|$. Thus, the set $D$ is the maximum independent set and $\beta\left(G_\tau\right)=\left|D\right|=\sum_{i=n}^{2n-3}\binom{n}{\lceil \frac{ i+1}{2}\rceil}.$
\end{proof}
\begin{corollary}
  Let $G_\tau$  be a discrete topological graph, where $\left|X\right|=n$, then
$\beta\left(G_\tau\right)=\gamma\left(G_\tau\right)$  $if and only if n=3$.
\end{corollary}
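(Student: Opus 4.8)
The plan is to compare the closed forms of $\gamma(G_\tau)$ and $\beta(G_\tau)$ supplied by Theorems \ref{FF8} and \ref{GG8} and to pin down the single value of $n$ at which they coincide. The ``if'' direction is a direct evaluation, while the ``only if'' direction reduces to the elementary fact that the independence number strictly exceeds $n$ as soon as $n\ge 4$.

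\smallskip
\noindent\emph{Sufficiency.} Suppose $n=3$. By Proposition \ref{EE8}, $G_\tau\equiv K_3\odot K_1$, and Theorem \ref{FF8} gives $\gamma(G_\tau)=n=3$. On the other hand, the maximum independent set exhibited in the proof of Theorem \ref{GG8} consists here of the three $2$-element subsets of $X$, any two of which meet; equivalently, in $K_3\odot K_1$ the three pendants form a maximum independent set. Hence $\beta(G_\tau)=\binom{3}{2}=3=\gamma(G_\tau)$.

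\smallskip
\noindent\emph{Necessity.} Suppose $n\neq 3$. If $n=2$, then $G_\tau\equiv K_2$ by Proposition \ref{DD8}, and one checks directly from Theorems \ref{FF8} and \ref{GG8} that $\gamma(G_\tau)$ and $\beta(G_\tau)$ do not agree. If $n\ge 4$, then $\gamma(G_\tau)=n$ by Theorem \ref{FF8}, while by Theorem \ref{GG8} the value $\beta(G_\tau)$ is a sum of binomial coefficients $\binom{n}{i}$ whose index range contains both $i=n-1$ and $i=n-2$; this uses only that $\lceil n/2\rceil\le n-2$ for $n\ge 4$. Consequently $\beta(G_\tau)\ge \binom{n}{n-1}+\binom{n}{n-2}=n+\binom{n}{2}>n=\gamma(G_\tau)$, so the equality fails. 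Combining the two directions yields $\beta(G_\tau)=\gamma(G_\tau)$ exactly when $n=3$.

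\smallskip
\noindent The only point requiring a moment of care is verifying that the term $\binom{n}{n-2}$ genuinely occurs in the sum of Theorem \ref{GG8} for every $n\ge 4$ (i.e.\ the inequality $\lceil n/2\rceil\le n-2$), and correctly reading off $\beta(G_\tau)=3$ at $n=3$ from that theorem; both are immediate, so no real obstacle arises.
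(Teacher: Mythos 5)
Your strategy---deducing the equivalence purely from the closed forms in Theorems \ref{FF8} and \ref{GG8}---is genuinely different from the paper's, which for the necessity direction argues structurally that a maximum independent set of size $n$ would have to consist of the $n$ pendant vertices, and that this can only happen at $n=3$. For $n\ge 4$ your route is in fact the cleaner one: since $\lfloor n/2\rfloor\le n-2$, the sum in Theorem \ref{GG8} contains the terms $i=n-1$ and $i=n-2$, so $\beta(G_\tau)\ge n+\binom{n}{2}>n=\gamma(G_\tau)$, which disposes of the whole range $n\ge4$ rigorously where the paper's argument is rather loose.

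There are, however, two genuine problems. First, the case $n=2$ does not behave as you claim: on the actual graph $G_\tau\equiv K_2$ one has $\beta(K_2)=1=\gamma(K_2)$, so the equality $\beta(G_\tau)=\gamma(G_\tau)$ \emph{does} hold at $n=2$ and the ``only if'' direction fails there. Your assertion that ``one checks directly from Theorems \ref{FF8} and \ref{GG8} that they do not agree'' rests on substituting $n=2$ into the displayed formula of Theorem \ref{GG8}, which returns $\binom{2}{1}=2$; but $2$ is not the independence number of $K_2$, so the check is vacuous. (The paper's own proof silently assumes $n>2$ in the converse and has the same hole; the honest fix is to add the hypothesis $n\ge3$ to the corollary.) Second, your reading of $\beta(G_\tau)=3$ at $n=3$ is not what the displayed formula of Theorem \ref{GG8} yields: $\sum_{i=\lfloor 3/2\rfloor}^{2}\binom{3}{i}=\binom{3}{1}+\binom{3}{2}=6$. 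The value $3$ is correct for $K_3\odot K_1$ (the three $2$-element subsets, i.e.\ the pendants, form a maximum independent set), but it must be justified from the explicit structure, as the paper does, rather than ``read off'' from that theorem. Both defects originate in inconsistencies already present in Theorem \ref{GG8}, but as written your proof inherits them rather than working around them.
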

\begin{proof}
If $n=3$, then according to Proposition \ref{EE8}, $\ \tau=\left\{\emptyset,\ X,\ \left\{1\right\},\ \left\{2\right\},\left\{3\right\},\ \left\{1,2\right\},\left\{1,3\right\},\left\{2,\ 3\right\}\right\}$, so $V=\left\{\left\{1\right\},\ \left\{2\right\},\ \left\{3\right\},\ \left\{1,2\right\},\left\{1,3\right\},\left\{2,\ 3\right\}\right\}$.Moreover, $D=\left\{\left\{1\right\},\ \left\{2\right\},\ \left\{3\right\}\right\}$ is  a minimum dominating set by proof of Theorem \ref{FF8}, and the set $I=\left\{\ \left\{1,2\right\},\left\{1,3\right\},\left\{2,\ 3\right\}\right\}$ is the maximum independent set according to Theorem \ref{GG8}, as shown in Figure \ref{CC8}. Thus, $\beta\left(G_\tau\right)=\gamma\left(G_\tau\right)$.\\
Conversely, If $\beta\left(G_\tau\right)=\gamma\left(G_\tau\right)$, according to Theorem\ref{FF8}, $\gamma\left(G_\tau\right)=n$ and so by assumption $\beta\left(G_\tau\right)=n$. Moreover, the number of pendant vertices is $n$, so the maximum independent set contains only pendants vertices and this situation is only achieved when $n=3$ as shown in Figure \ref{CC8}. Therefore, the required is obtained.
\end{proof}
Let $\left|X\right|=5$, then \\
$\tau= \{  \emptyset,\ X,\ \left\{1\right\},\ \left\{2\right\},\left\{3\right\},\ \left\{4\right\},\ \left\{5\right\},  \ \left\{1,2\right\},$\\ $\left\{1,3\right\},\ \left\{1,\ 4\right\},\ \left\{1,\ 5\right\},\ \left\{2,\ 3\right\},\ \left\{2,\ 4\right\},\ \left\{2,\ 5\right\} $, \\ $\ \left\{3,\ 4\right\},\ \left\{3,\ 5\right\},\left\{4,\ 5\right\},\ \left\{1,2,\ 3\right\},\ \left\{1,2,\ 4\right\},\ \left\{1,2,\ 5\right\},$ \\ $\ \left\{1,3,\ 4\right\},\ \left\{1,3,\ 5\right\},\ \left\{1,4,\ 5\right\},\ \left\{2,3,\ 4\right\},\ \left\{2,3,\ 5\right\},\ \left\{2,4,\ 5\right\},$ \\ $ \ \left\{3,4,\ 5\right\},\left\{1,2,\ 3,4\right\},\ \left\{1,2,\ 3,5\right\},\ \left\{1,2,\ 4,5\right\},\ \left\{1,3,\ 4,5\right\},\ \left\{2,3,\ 4,5\right\} \}$\\
so $V= \{ \left\{1\right\},\ \left\{2\right\},\left\{3\right\},\ \left\{4\right\},\ \left\{5\right\},\ \left\{1,2\right\},\left\{1,3\right\},\ \left\{1,\ 4\right\},\ \left\{1,\ 5\right\},$ \\ $\ \left\{2,\ 3\right\},\ \left\{2,\ 5\right\},\ \left\{3,\ 4\right\},\ \left\{3,\ 5\right\},\left\{4,\ 5\right\},\ \left\{1,2,\ 3\right\},\ \left\{1,2,\ 4\right\},\ \left\{1,2,\ 5\right\},$ \\$ ,\ \left\{1,3,\ 4\right\},\ \left\{1,3,\ 5\right\},\ \left\{1,4,\ 5\right\},\ \left\{2,3,\ 4\right\},\ \left\{2,3,\ 5\right\},\ \left\{2,4,\ 5\right\},\ \left\{3,4,\ 5\right\},$ \\ $ \left\{1,2,\ 3,4\right\},\ \left\{1,2,\ 3,5\right\},\ \left\{1,2,\ 4,5\right\},\ \left\{1,3,\ 4,5\right\},\ \left\{2,3,\ 4,5\right\}$          \},and the graph as the following figure \\
$D=\left\{\left\{1\right\},\ \left\{2\right\},\left\{3\right\},\ \left\{4\right\},\ \left\{5\right\}\right\}, is the minimum dominating set and \gamma\left(G_\tau\right)=5$.\\
$I=\{ \left\{1,2,\ 3\right\},\ \left\{1,2,\ 4\right\},\ \left\{1,2,\ 5\right\},\ \left\{1,3,\ 4\right\},\ \left\{1,3,\ 5\right\},\ \left\{1,4,\ 5\right\},\ \left\{2,3,\ 4\right\},\ \left\{2,3,\ 5\right\},$\\ $\left\{2,4,\ 5\right\},\ \left\{3,4,\ 5\right\},\left\{1,2,\ 3,4\right\},\ \left\{1,2,\ 3,5\right\},\ \left\{1,2,\ 4,5\right\},\ \left\{1,3,\ 4,5\right\},\ \left\{2,3,\ 4,5\right\}  \}$\\
the maximum independent set and $\beta\left(G_\tau\right)=15$. $\delta\left(G_\tau\right)=1$.  $\Delta(G_{\tau})=15$. The clique is $K_5$ and the set of pendant vertices is\\
$ S=\{ \left\{1,2,\ 3,4\right\},\ \left\{1,2,\ 3,5\right\},\ \left\{1,2,\ 4,5\right\},\ \left\{1,3,\ 4,5\right\},\ \left\{2,3,\ 4,5\right\} \}$,\\
so $|S|=5$
\begin{figure}[h]
	\centering
	\includegraphics[width=0.9\linewidth]{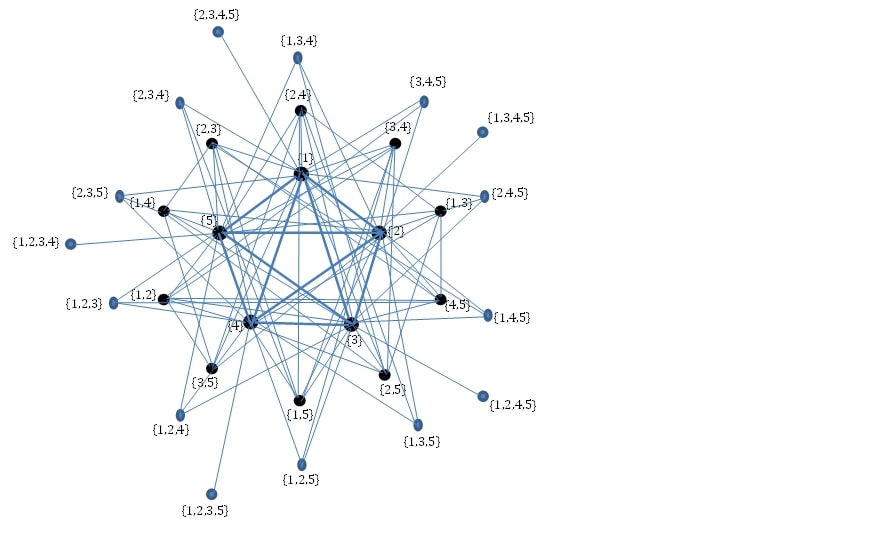}
	\caption{The discrete topological graph, where $\left|X\right|=5$.}
	\label{HH8}
\end{figure}
\begin{theorem}
Let $G_\tau$  be a discrete topological graph of a nonempty set $\ X$, then $G_\tau$ is a connected graph.
\end{theorem}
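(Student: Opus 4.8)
The plan is to exploit the clique of singleton vertices that already appeared in the proof of Proposition \ref{CC8}. First I would recall that, since $\tau$ is the discrete topology, the vertex set $V$ is precisely the collection of all subsets $A$ of $X$ with $\emptyset\neq A\neq X$; in particular, whenever $n=\left|X\right|\geq 2$, every singleton $\left\{x\right\}$ with $x\in X$ is a vertex, and any two distinct singletons $\left\{x\right\},\left\{y\right\}$ satisfy $\left\{x\right\}\cap\left\{y\right\}=\emptyset$ and hence are adjacent. Thus the $n$ singleton vertices induce a complete subgraph $S$ of $G_\tau$, so they all lie in one common connected component.

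The second step is to show that every vertex of $G_\tau$ has a neighbour in $S$. Let $A\in V$ be arbitrary. Because $A\neq X$, the set $X\setminus A$ is nonempty, so we may choose $x\in X\setminus A$. Then $\left\{x\right\}$ is a vertex of $G_\tau$ and $A\cap\left\{x\right\}=\emptyset$, so $A$ is adjacent to $\left\{x\right\}\in S$. Hence $A$ belongs to the same component as the singleton clique, and since $A$ was arbitrary, $G_\tau$ has a single component.

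Combining the two steps gives an explicit short path: given any two vertices $A,B\in V$, pick $x\in X\setminus A$ and $y\in X\setminus B$; then $A-\left\{x\right\}-\left\{y\right\}-B$ is a walk in $G_\tau$ (collapsing to $A-\left\{x\right\}-B$ when $x=y$). Therefore every pair of vertices is joined by a path of length at most $3$, so $G_\tau$ is connected; as a by-product this already shows $\mathrm{diam}(G_\tau)\leq 3$. The degenerate case $n=2$ is immediate from Proposition \ref{DD8}, since $K_2$ is connected.

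There is no serious obstacle here. The only point needing care is the verification that the singleton $\left\{x\right\}$ chosen in the second step is genuinely a vertex of $G_\tau$ --- which holds precisely because $n\geq 2$ forces $\left\{x\right\}$ to be a proper nonempty subset of $X$ --- together with the observation that the argument is uniform in $n$ and in $\left|A\right|$, so no induction or case analysis on the size of $A$ is required.
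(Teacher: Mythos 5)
Your proposal is correct and rests on exactly the same two facts as the paper's proof: the $n$ singleton vertices form a clique, and every vertex $A\neq X$ is adjacent to some singleton $\left\{x\right\}$ with $x\in X\setminus A$. You merely package the paper's three-case analysis into a single uniform argument (every vertex has a neighbour in the clique), which is slightly cleaner and yields the bound $\mathrm{diam}(G_\tau)\leq 3$ as a free by-product.
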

\begin{proof}
Let $v_1$ and $v_2$ are any two vertices in $G_\tau $, let $S$ be a set of all vertices that have singleton elements in $G_\tau$. There are three cases as follows:\\
{{\bf Case 1}}: If  $v_1\ ,\ \ v_2\in S$ then there is an edge $v_1\ v_2 in G_\tau$ since $G\left[S\right]=K_n$.\\
{{\bf Case 2}}: If $v_1\in S$  and $v_2\notin S$ then there is an edge between $v_2$ and at least one vertex from $S$ say $u$ (since the element in $u$ don't belong to $v_2 $ , so ${u\cap v}_2=\emptyset)$. Thus, $v_1-\ u-v_2$ is a path in $G_\tau$.\\
{{\bf Case 3}}: If  $v_{1\ },\ \ v_2\notin S $ then in a similar technique of case $2$, either there is a vertex $t\ \in S$ such that $v_1\ t and v_2\ t$ are two edges in $G_\tau$ , then  $v_1-\ t-v_2$ is a path in $G_\tau$ . Or there are two edges $v_1\ u$ and $v_2\ w$ in $G_\tau$ such that $u\ ,\ \ w\ \in S$. Thus,$ v_1-\ u-w-v_2$ is a path in $G_\tau$.  Hence, in all the above cases, there is a path between any two vertices in $G_\tau$ which proves it a connected graph.
\end{proof}
\begin{prop}
The order of discrete topological graph $G_\tau$ is $2^n-2$ if  $\left|X\right|=n$.
\end{prop}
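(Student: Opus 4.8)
The plan is to observe that the order of $G_\tau$ is pinned down the moment one recalls what the discrete topology actually is. First I would note that, by definition, $\tau$ being the discrete topology on a set $X$ with $\left|X\right|=n$ means $\tau=\mathcal{P}(X)$, the collection of \emph{all} subsets of $X$; hence $\left|\tau\right|=2^{n}$. Then, recalling from Definition 2.1 that $V=\left\{A:A\in\tau,\ A\neq\emptyset,\ A\neq X\right\}$, the vertex set is obtained from $\tau$ by deleting exactly the two sets $\emptyset$ and $X$. Since $X$ is nonempty we have $\emptyset\neq X$, so these are two distinct members of $\tau$, and therefore $\left|V\right|=\left|\tau\right|-2=2^{n}-2$. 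As the order of a graph is the cardinality of its vertex set, this already gives the claim.

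Alternatively, and more in the spirit of the earlier degree counts, I would stratify the vertices by cardinality: for each $k$ with $1\le k\le n-1$ there are precisely $\binom{n}{k}$ subsets of $X$ of size $k$, each of which is a vertex, while no subset of size $0$ or $n$ is a vertex. Summing over $k$ and invoking the binomial identity $\sum_{k=0}^{n}\binom{n}{k}=2^{n}$ yields $\left|V\right|=\sum_{k=1}^{n-1}\binom{n}{k}=2^{n}-\binom{n}{0}-\binom{n}{n}=2^{n}-2$. This second route also re-derives, as a byproduct, the cardinalities used in the maximum-degree theorem, so I might present it for consistency with the rest of the section.

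There is essentially no genuine obstacle here; the only point that deserves a word of care is that we are really removing \emph{two} elements from $\tau$, not one, which requires $\emptyset\neq X$ — guaranteed by the standing hypothesis that $X$ is nonempty (indeed the paper assumes $n\ge 2$). I would state that observation explicitly so the count $2^{n}-2$ is unambiguous, and then conclude.
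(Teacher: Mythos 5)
Your proposal is correct and its main argument — that $\tau=\mathcal{P}(X)$ has $2^{n}$ elements and the vertex set removes exactly the two distinct sets $\emptyset$ and $X$ — is precisely the paper's own proof. The alternative binomial-sum count is a pleasant consistency check but not needed.
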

\begin{proof}
Since $\tau$ is a power set containing all subsets of  $X$ which are $2^n$ sets, then $G_\tau$ has all elements of  $\tau$ unless $\emptyset$ and $X$ according to its definition.
\end{proof}
\begin{prop}
There is no isolated vertex in discrete topological graph $G_\tau$.
\end{prop}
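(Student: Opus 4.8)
The plan is to show directly that every vertex of $G_\tau$ has a neighbour. Fix an arbitrary vertex $A\in V$. By the definition of $G_\tau$ we have $A\in\tau$ with $A\neq\emptyset$ and $A\neq X$, so $A$ is a proper nonempty subset of $X$.

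First I would pass to the complement $B=X\setminus A$. Since $A\neq X$ the set $B$ is nonempty, and since $A\neq\emptyset$ we get $B\neq X$. As $\tau$ is the discrete topology, every subset of $X$ lies in $\tau$, so $B\in\tau$; combined with $B\neq\emptyset$ and $B\neq X$ this shows $B\in V$, i.e. $B$ is a genuine vertex of $G_\tau$.

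Next I would invoke the edge rule: $A\cap B=A\cap(X\setminus A)=\emptyset$, hence $AB\in E$. Thus $A$ is adjacent to $B$, so $A$ is not isolated; since $A$ was arbitrary, $G_\tau$ has no isolated vertex.

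I do not anticipate any real obstacle here. The only step needing (entirely routine) care is checking that the proposed neighbour is a legitimate vertex, i.e. ruling out $B=\emptyset$ and $B=X$, which is immediate from $A$ being a proper nonempty subset. As an alternative one could take $B=\{x\}$ for any $x\in X\setminus A$, which is again a vertex disjoint from $A$; this is essentially the mechanism already used in Propositions \ref{DD8}, \ref{EE8} and \ref{CC8}, and it also makes the present statement a consequence of $\delta(G_\tau)=1$ proved earlier.
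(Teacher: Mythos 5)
Your proof is correct and follows essentially the same route as the paper: the paper also takes the complement $u^c$ of a vertex $u$ and uses $u\cap u^c=\emptyset$ to produce an edge. Your version is in fact slightly more careful, since you explicitly verify that the complement is a legitimate vertex (neither $\emptyset$ nor $X$), a check the paper omits.
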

\begin{proof}
Let  $u\in\tau$ then  $u\subseteq X $ so that $u^c\subseteq X$ and since $u\cap u^c=\emptyset$ , thus there is an edge between $u$ and $u^c$ which gives the result.
\end{proof}
\begin{prop}
Let $\left|X\right|=n$ and $G_\tau$ be a discrete topological graph, the $G_\tau$ has $N_n$ induced subgraph.
\end{prop}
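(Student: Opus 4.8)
The plan is to exhibit an explicit independent set of size $n$ in $G_\tau$, since $N_n$ (the edgeless graph on $n$ vertices) occurs as an induced subgraph of $G_\tau$ exactly when $G_\tau$ contains such a set. I would take the family
\[
S = \{\, X\setminus\{x\} : x \in X \,\},
\]
consisting of all subsets of $X$ of cardinality $n-1$. Each member of $S$ is a nonempty proper subset of $X$, hence lies in the vertex set $V$ by Definition of $G_\tau$, and since the map $x \mapsto X\setminus\{x\}$ is injective we get $|S| = n$. These are moreover precisely the $n$ pendant vertices identified in Proposition \ref{CC8}(2), so the count $|S|=n$ can also be quoted from there.

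Next I would verify that $S$ induces no edges. For two distinct elements $x, y \in X$ the intersection of the corresponding vertices is $(X\setminus\{x\}) \cap (X\setminus\{y\}) = X\setminus\{x,y\}$, which is nonempty as soon as $|X| = n \geq 3$. By the definition of the edge set $E = \{AB : A \cap B = \emptyset\}$, no two vertices of $S$ are adjacent, hence the subgraph induced by $S$ is $N_n$, as required.

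The argument is essentially immediate, so I do not expect a genuine obstacle; the only point to flag is the degenerate case $n = 2$, where the two ``$(n-1)$-subsets'' are the singletons $\{1\}$ and $\{2\}$, which \emph{are} adjacent — consistent with $G_\tau \cong K_2$ having no independent pair — so the statement must be read under the standing hypothesis $n \geq 3$ in force throughout this section. The remaining care needed is simply to confirm that the $n$ listed subsets are pairwise distinct vertices of $G_\tau$, which follows from injectivity of complementation on singletons.
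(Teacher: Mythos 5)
Your proof is correct and takes essentially the same approach as the paper: both use the family of all $(n-1)$-element subsets of $X$ and observe that any two of them intersect, hence induce no edge. Your additional remark that the argument requires $n\geq 3$ (since for $n=2$ the two $(n-1)$-subsets are disjoint singletons) is a caveat the paper omits.
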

\begin{proof}
Let $S$ be a set of all subsets of  $X$ that have $n-1$ elements from $X$. It is obvious $\left|S\right|=n$ and $S\subseteq\tau$. Let $u\ ,\ v\in S$ then there is at least one element belonging to $u\cap v$ . Hence, $u\cap v\neq\emptyset$ and there is no edge between any two vertices of  $S$ .  Thus, $G[S]$ is a null graph of order $n$ .
\end{proof}
\begin{theorem}
Let $G_\tau$ be a discrete topological graph of order $n$ defined on a set $X$. And $H_\tau$ be a discrete topological graph of order $m$ defined on a set $Y$, then
\begin{enumerate}
  \item $\gamma\left(G_\tau\bigodot H_\tau\right)=n $.
  \item $ \gamma\left(G_\tau+H_\tau\right)=\left\{%
\begin{array}{ll}
    \gamma\left(G_\tau\right),  \hbox{  if $n\le m$ ;} \\
   \gamma\left(H_\tau\right),  \hbox{  if $n>m$ .} \\
\end{array}%
 \right.    $
\end{enumerate}

\end{theorem}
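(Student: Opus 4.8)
The plan is to prove the two parts separately, invoking only the general behaviour of the corona and of the join together with the structural facts about $G_\tau$ already available: the pendant count (Proposition \ref{CC8}) and the value of the domination number (Theorem \ref{FF8}). Throughout I read ``order $n$'' as $n=|V(G_\tau)|$ and ``order $m$'' as $m=|V(H_\tau)|$.

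\textbf{Part (1): the corona $G_\tau\bigodot H_\tau$.} Let $v_1,\dots,v_n$ be the vertices of $G_\tau$; by definition $G_\tau\bigodot H_\tau$ is $G_\tau$ together with $n$ disjoint copies $H^{(1)},\dots,H^{(n)}$ of $H_\tau$, where $v_i$ is joined to every vertex of $H^{(i)}$. For the upper bound I would take $D=\{v_1,\dots,v_n\}$: it dominates $G_\tau$ trivially and each $H^{(i)}$ through $v_i$, so $\gamma(G_\tau\bigodot H_\tau)\le n$. For the lower bound I would note that the sets $W_i=V(H^{(i)})\cup\{v_i\}$ are pairwise disjoint and that every dominating set must meet each $W_i$, because any vertex of $H^{(i)}$ has its entire closed neighbourhood inside $W_i$; hence at least $n$ vertices are needed. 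Combining the two bounds gives $\gamma(G_\tau\bigodot H_\tau)=n$. This part is routine and uses nothing about $G_\tau$ beyond its order and the non-emptiness of $H_\tau$ (which holds since $|Y|\ge 2$).

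\textbf{Part (2): the join $G_\tau+H_\tau$.} First I would record that, by Theorem \ref{FF8} (the order of a discrete topological graph being increasing in $|X|$), $n\le m$ is equivalent to $\gamma(G_\tau)\le\gamma(H_\tau)$, so the claimed identity reads $\gamma(G_\tau+H_\tau)=\min\{\gamma(G_\tau),\gamma(H_\tau)\}$; assume without loss of generality that $\gamma(G_\tau)\le\gamma(H_\tau)$. The upper bound is immediate: a minimum dominating set $D$ of $G_\tau$ still dominates $V(G_\tau)$ inside the join, and it dominates all of $V(H_\tau)$ because in $G_\tau+H_\tau$ every vertex of $G_\tau$ is adjacent to every vertex of $H_\tau$; thus $\gamma(G_\tau+H_\tau)\le\gamma(G_\tau)$. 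For the reverse inequality I would take an arbitrary dominating set $S$ of $G_\tau+H_\tau$, write $S=S_G\cup S_H$ with $S_G\subseteq V(G_\tau)$ and $S_H\subseteq V(H_\tau)$, and argue $|S|\ge\gamma(G_\tau)$ by distinguishing whether $S$ lies in one factor or meets both. If $S\subseteq V(G_\tau)$, then $S$ must be a dominating set of $G_\tau$ itself, since no vertex of the other factor is available to help, so $|S|\ge\gamma(G_\tau)$; the case $S\subseteq V(H_\tau)$ is symmetric and gives $|S|\ge\gamma(H_\tau)\ge\gamma(G_\tau)$.

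\textbf{Where the difficulty lies.} The crux is the mixed case of the lower bound in Part (2), where $S$ meets both factors: the cross-edges of the join make domination cheap, so forcing $|S|\ge\gamma(G_\tau)$ there cannot be done by soft counting and must exploit the specific structure of discrete topological graphs — in particular that neither factor has a universal vertex (a set disjoint from every singleton-vertex would be empty) and the way the $n$ pendants hang off the singleton-vertices in the proof of Theorem \ref{FF8}. My approach would be to push that pendant-counting argument through the added cross-edges, tracking how many elements of $S$ are spent on the pendant structure of each factor and how many are forced to lie in the singleton-vertex cliques. Part (1) and the upper bound of Part (2) are short and formal; this mixed-case analysis is the main obstacle.
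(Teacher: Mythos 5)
Your Part (1) is correct and in fact more complete than the paper's own argument: the paper merely declares $D=V(G_\tau)$ to be \emph{the} minimum dominating set, whereas you supply the missing lower bound via the $n$ pairwise disjoint closed neighbourhoods $W_i=V(H^{(i)})\cup\{v_i\}$. Your upper bound in Part (2) also matches what the paper does --- and that upper bound is the only thing the paper's proof actually establishes.

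The mixed case you identify as the crux of Part (2) cannot be closed, because the lower bound $\gamma(G_\tau+H_\tau)\ge\gamma(G_\tau)$ is false. Take any $g\in V(G_\tau)$ and any $h\in V(H_\tau)$. In the join, $g$ is adjacent to every vertex of $H_\tau$ and $h$ is adjacent to every vertex of $G_\tau$, so $\{g,h\}$ dominates $G_\tau+H_\tau$ and hence $\gamma(G_\tau+H_\tau)\le 2$. By Theorem \ref{FF8}, $\gamma(G_\tau)=|X|$ once $|X|\ge 3$, so as soon as both ground sets have at least three elements the claimed value $\min\{\gamma(G_\tau),\gamma(H_\tau)\}\ge 3$ exceeds the true value, which is exactly $2$ (neither factor has a universal vertex, since no nonempty proper subset of $X$ is disjoint from all others). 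No pendant-counting argument can rescue this: every pendant of $G_\tau$ is dominated across the join by \emph{any single} vertex of $H_\tau$, so the structure of Theorem \ref{FF8} is simply irrelevant inside the join. Your observation that ``the cross-edges make domination cheap'' is exactly the point, and the correct conclusion is that Part (2) as stated is wrong except in the degenerate case $\min\{|X|,|Y|\}=2$, where one factor is $K_2$, the join has a universal vertex, and $\gamma=1=\gamma(K_2)$. The paper's proof commits the error you were trying to avoid: it checks that a minimum dominating set of $G_\tau$ still dominates the join and then asserts minimality with no argument. You should report a counterexample here rather than keep searching for a proof.
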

\begin{proof}
{{\bf(1)}}: Let $v_i\in V(G_\tau)$, then $v_i$ is adjacent with all vertices of the $i^{th}$ copy of $H_\tau$ , thus  $v_i$ dominates all vertices of one copy of $H_\tau$, so  $v_i\in D$. Therefore, $D=V(G_\tau)$ is the minimum dominating set of  $G_\tau\bigodot H_\tau$ and  $\gamma\left(G_\tau\bigodot H_\tau\right)=n$.\\
{{\bf(2)}}: Let $n\le m$, since  $\gamma\left(G_\tau\right)=n$  and its dominating set contains only the sets of singleton elements in $G_\tau$ according to Theorem \ref{FF8} and since these singleton elements will adjacent with all vertices of $H_\tau$ according to corona operation definition. Then, the dominating set of $G_\tau+H_\tau$ is the same dominating set of $G_\tau$. Hence, $D\left(G_\tau+H_\tau\right)=D(G_\tau)$ and $\gamma\left(G_\tau+H_\tau\right)=\gamma\left(G_\tau\right)=n$. In a similar technique if $n>m$ we can prove $D\left(G_\tau+H_\tau\right)=D(H_\tau)$ and $\gamma\left(G_\tau+H_\tau\right)=\gamma\left(H_\tau\right)=m$ .
\end{proof}
\begin{theorem}
 Let $\left|X\right|=n (n\geq3)$ and $G_\tau$ be a discrete topological graph on $X$, then  ${rad\ (G}_\tau)=2$ and $diam{\ (G}_\tau)=3$.
\end{theorem}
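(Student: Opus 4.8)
The plan is to prove the diameter claim first (upper bound, then a witnessing pair at distance exactly $3$), and then the radius claim (lower bound via absence of a universal vertex, upper bound by exhibiting a vertex of eccentricity $2$). Throughout, let $S$ denote the clique of singleton vertices, so $G_\tau[S]\cong K_n$ and every two distinct singletons are adjacent (being disjoint).

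For $\mathrm{diam}(G_\tau)\le 3$ I would simply reuse the case analysis of the connectedness theorem: for any vertices $v_1,v_2$, if $v_1,v_2\in S$ they are adjacent; if exactly one, say $v_1$, lies in $S$, then $v_2\ne X$ leaves some element outside $v_2$, giving $u\in S$ with $u\cap v_2=\emptyset$, so $v_1-u-v_2$ (length $\le 2$, and $=1$ if $u=v_1$); if neither lies in $S$, pick $u,w\in S$ disjoint from $v_1,v_2$ respectively, and either $u=w$ (length $2$) or $v_1-u-w-v_2$ (length $3$). Hence $\mathrm{dist}(v_1,v_2)\le 3$ always. For the matching lower bound, use $n\ge 3$ to choose distinct $a,b\in X$ and set $v_1=X\setminus\{a\}$, $v_2=X\setminus\{b\}$; both are genuine vertices (nonempty, and proper since they have $n-1\ge 2$ elements), they are non-adjacent because $v_1\cap v_2\ne\emptyset$, and any common neighbour $w$ would satisfy $w\subseteq(X\setminus v_1)\cap(X\setminus v_2)=\{a\}\cap\{b\}=\emptyset$, impossible. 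So $\mathrm{dist}(v_1,v_2)=3$ and $\mathrm{diam}(G_\tau)=3$.

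For $\mathrm{rad}(G_\tau)$, I would first show no vertex has eccentricity $1$, i.e.\ $G_\tau$ has no universal vertex: if $v$ is not a singleton, pick $x\in v$; then $\{x\}$ is a vertex and $v\cap\{x\}\ne\emptyset$, so $v\not\sim\{x\}$. If $v=\{x\}$ is a singleton, pick $b\ne x$; then $X\setminus\{b\}$ is a vertex (here $n\ge3$ is needed so it is proper), it contains $x$, hence $v\not\sim X\setminus\{b\}$. Thus every eccentricity is $\ge 2$. Then I would show the singleton $\{1\}$ has eccentricity exactly $2$: a singleton $\ne\{1\}$ is adjacent to it; a non-singleton $w$ with $1\notin w$ is adjacent to it; and a non-singleton $w$ with $1\in w$ has some $j\notin w$ (as $w\ne X$), necessarily $j\ne 1$, so $\{1\}-\{j\}-w$ is a path of length $2$. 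Taking $w=X\setminus\{2\}$ shows the eccentricity of $\{1\}$ is not $1$, hence equals $2$. Combining, $\mathrm{rad}(G_\tau)=2$.

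I do not expect a genuine obstacle here; the proof is a careful bookkeeping argument. The only delicate points are ensuring every auxiliary subset I write down is actually a vertex of $G_\tau$ (nonempty and \emph{proper}), which is precisely where the hypothesis $n\ge 3$ is used, and making sure the ``distance equals $3$'' and ``eccentricity equals $2$'' claims are not quietly undermined by a shorter path; both are handled by the explicit ``no common neighbour'' / ``not adjacent'' observations above.
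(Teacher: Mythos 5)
Your proof is correct and follows essentially the same route as the paper: the singleton vertices realize eccentricity $2$, and suitably chosen non-singleton vertices realize distance $3$. Your version is in fact more complete than the paper's, since you explicitly exhibit the pair $X\setminus\{a\}$, $X\setminus\{b\}$ and verify it has no common neighbour (certifying $\mathrm{diam}(G_\tau)=3$ rather than merely asserting a length-$3$ shortest path exists), and you explicitly rule out eccentricity $1$ by showing there is no universal vertex—both points the paper's proof leaves implicit.
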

\begin{proof}
Let $v\ \in V(G_\tau)$ such that it has a singleton element, then either$ d\left(v,\ w\right)=1$ if the vertex w of the singleton element or $\ v\ w\in E(G_\tau)$ according to Proposition\ref{CC8}. Or  $d\left(v,\ w\right)=2$  if  $w$ has more than one element and $v\ w\notin E(G_\tau)$ since $v-\acute{v}-w$ is a shortest $\left(u,v\right)-path$ in $G_\tau$ where $\acute{v}$ has singleton element. Thus, the eccentricity of every vertex that has a singleton element is $e\left(v\right)=2$ which is the minimum eccentricity among all vertices of $G_\tau$. Hence, ${rad\ (G}_\tau)=2$ .
Now, to evaluate diameter $G_\tau$ , let $w_1\ ,\ \ w_2\in V(G_\tau)$ such that these vertices has non-singleton elements and $w_1\ w_2\notin E(G_\tau)$ . Either $w_1$ and $w_2$ adjacent with the same vertex $v$ , so $d\left(v,\ w\right)=2$ . Or $w_1-u-v-w_2$ is the shortest  $\left(w_1,w_2\right)-$path of length $3$ in $G_\tau$ where $v$ and $u$ have singleton elements. Thus,$e\left(w_1\right)=e\left(w_2\right)=3$ which is the maximum eccentricity among all vertices of $G_\tau$. Hence, $diam{\ (G}_\tau)=3$.
\end{proof}
\begin{prop}
Let $\left|X\right|=n (n\geq3)$ and $G_\tau $ be a discrete topological graph on $X$, then every vertex has a singleton element is a cut vertex and every vertex has $n-1$ element is not cut vertex in $G_\tau$.
\end{prop}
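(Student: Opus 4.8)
The plan is to reduce both halves of the statement to Proposition \ref{CC8}, which identifies the pendant vertices of $G_\tau$ as precisely those whose label is an $(n-1)$-element subset of $X$: if $v=X\setminus\{a\}$, then a nonempty set $B$ with $B\cap v=\emptyset$ must satisfy $B\subseteq\{a\}$, hence $B=\{a\}$, so $v$ is a pendant whose unique neighbour is the singleton vertex $\{a\}$.

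First I would handle the singleton vertices. Fix $a\in X$ and set $v=\{a\}$. By the remark above, the pendant $w=X\setminus\{a\}$ has $v$ as its only neighbour, so in $G_\tau-v$ the vertex $w$ is isolated. Since $n\geq3$, the order of $G_\tau$ is $2^n-2\geq6$, so $G_\tau-v$ still contains vertices other than $w$ (for instance the remaining singleton vertices, which stay mutually adjacent because they induced a $K_{n-1}$). Hence $G_\tau-v$ has an isolated vertex and is not the trivial one-vertex graph, so it is disconnected, and $v$ is a cut vertex.

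Next I would treat the $(n-1)$-element vertices. Let $v=X\setminus\{a\}$, which by Proposition \ref{CC8} is a pendant with $\deg(v)=1$. Since $G_\tau$ is connected (by the connectedness theorem proved earlier), it suffices to observe that deleting a pendant vertex from a connected graph leaves a connected graph: any walk joining two vertices distinct from $v$ that passed through $v$ would have to both enter and leave $v$, which is impossible when $v$ has a single neighbour, so every such walk already avoids $v$. Thus $G_\tau-v$ is connected and $v$ is not a cut vertex.

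The whole argument is routine once Proposition \ref{CC8} is available; the only point that needs a moment's attention is checking that $G_\tau-\{a\}$ is genuinely disconnected rather than vacuously connected, which is exactly where the hypothesis $n\geq3$ (equivalently $|V(G_\tau)|\geq6$) enters. No computation is required, so I do not anticipate a real obstacle beyond stating these two standard facts cleanly.
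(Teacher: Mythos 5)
Your proposal is correct and follows essentially the same route as the paper: the singleton vertex $\{a\}$ is the unique neighbour of the pendant $X\setminus\{a\}$, so deleting it isolates that pendant, while the $(n-1)$-element vertices are themselves pendants of a connected graph and hence not cut vertices. Your version is in fact slightly more careful than the paper's, since you explicitly check that $G_\tau-\{a\}$ has other vertices besides the isolated one and justify why deleting a pendant preserves connectedness.
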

\begin{proof}
Since every vertex of singleton element in $G_\tau$ is a subset from all vertices of $n-1$ elements unless one vertex. Then, there is only one edge between a vertex $u$ of singleton element and vertex $v$ of  $n-1$ elements where $u\nsubseteq v$. Hence, $G_\tau-u$ contains at least two components one of them is the isolated vertex $v$ . Therefore, $u$ is a cut vertex and $v$ is not cut vertex since it was pendant vertex in $G_\tau$
\end{proof}
\section*{Conclusions}
From, the results are mentioned above, the new graphs depending on the discrete topology are been introduced. Many properties of this graph are been proved. Moreover, the domination, independence, and clique numbers are been determined. Also,  the domination number for join and corona operation of two graphs are been discussed.

%%%%%%%%%%%%%%%%%%%%%%%%%%%%%%%%%%%%%%%%%%%%%%%%%%%%%%%%%%%%%%%%%%%

\end{document}